\newtheorem{theorem}{Theorem}[section]
\newtheorem{lemma}[theorem]{Lemma}
\newtheorem{corollary}[theorem]{Corollary}
\theoremstyle{definition}
\newtheorem{definition}[theorem]{Definition}
\newtheorem{example}[theorem]{Example}
\theoremstyle{remark}
\newtheorem{remark}[theorem]{Remark}
\numberwithin{equation}{section}
\begin{document}

\setcounter{page}{1}

%\title[Property(E), Reflexivity and Bernstein Lethargy Theorem ]{ Property (E), Reflexivity and Bernstein Lethargy Theorem }
\title{On a theorem of Nikol'skii}

\author[A. AKSOY, Q. Peng]{Asuman G\"{u}ven AKSOY$^*, $ Qidi Peng$^1$}

\address{$^{*}$Department of Mathematics, Claremont McKenna College, 850 Columbia Avenue, Claremont, California  91711, USA.}
\email{\textcolor[rgb]{0.00,0.00,0.84}{aaksoy@cmc.edu}}
\address{$^{1}$Institute of Mathematical Sciences, Claremont Graduate University, 1237 N. Dartmouth Avenue, Claremont, California  91711, USA.}
\email{\textcolor[rgb]{0.00,0.00,0.84}{qidi.peng@cgu.edu}}

%\dedicatory{This paper is dedicated to Professor ABCD}

\subjclass[2010]{Primary 41A25; Secondary 41A50, 46B20.}

\keywords{Best approximation, Bernstein's lethargy theorem, reflexive Banach space.}

%\date{Received: xxxxxx; Revised: yyyyyy; Accepted: zzzzzz.
%\newline \indent $^{*}$Corresponding author}

\begin{abstract}
We present Bernstein lethargy theorem  and  examine the relationship between  Bernstein lethargy theorem and reflexivity.
%%%%%%%%%%%%%%
\iffalse

The equivalence of existence of best approximation  and the reflexivity is well known, moreover equivalence of reflexivity and Bernstein lethargy theorem was proved for a finite sequence of nonnegative numbers with the property 
$e_1\geq e_2\geq \dots\geq e_N$ and $e_{N+1}=0$ for $n\geq N$. In this paper, we consider existence of best approximation together with a sequence of nonnegative numbers  $\{d_n\}$ satisfying

% \label{condition_Borodin}
 $$ d_n >  \sum_{k = n+1}^\infty d_k \quad \mbox{for all}\quad  n \ge n_0 \quad \mbox{ at which}\quad  d_n > 0 $$
  %\end{equation}
  and investigate the relationship with Benstein lethargy theorem and reflexivity.
\fi
%%%%%%%%%%%%
\end{abstract} \maketitle

\section{Introduction}
%%%%%%%%%%%%
%%%%%%%%%%%%%%%%%
%%%%%%%%%%%%%%%%%%%%
%%%%%%%%%%%%%%%%%%%%%
\subsection{Bernstein Lethargy Theorem}
 One of the notable theorems used in the constructive theory of functions, called the Bernstein's Lethargy Theorem (BLT) \cite{Bernstein}. For a subspace $Y$ of a normed linear space $(X,\|\cdot\|)$,  define the distance from an element $x \in X$ to $Y$ by
$$\rho(x, Y):= \inf \{ \|x-y\|: \,\,y\in Y\}.$$
The Weierstrass Approximation Theorem states that polynomials are dense in $C[0,1]$, thus it is known that for any $f\in C[0,1]$, $$\displaystyle \lim_{n \to \infty} \rho(f, \Pi_n) = 0,$$
where for each $n\ge1$,  $\Pi_n$ is the vector space of all real polynomials of degree at most equal to $n$.
However, the Weierstrass Approximation Theorem gives no information about the speed of convergence for $\rho(f, \Pi_n)$. In $1938$, S.N. Bernstein \cite{Bernstein} proved that if $\{d_n \}_{n\ge1}$ is a non-increasing  sequence  of positive numbers with $\lim\limits_{n\to\infty}d_n=0$, then there exists a function $f\in C[0,1]$ such that
$$\rho(f, \Pi_n)= d_n,~\mbox{for all $n \ge1$}.$$
This remarkable result is called the Bernstein's Lethargy Theorem (BLT) and is used in the constructive theory of functions \cite{Sin}, and  in the theory of quasi-analytic functions in several complex variables \cite{Ple2}.

More generally, let $\{Y_n\}$ be a system of strictly nested subspaces of the Banach space $X$. The sequence of errors of the best approximation from $x\in X$ to $Y_n$, denoted by $\{\rho(x,Y_n)\}$, may converge to zero at an arbitrarily slow rate  or an arbitrarily fast rate. For example, Shapiro in \cite{Sha}, replacing $C[0,1]$ with an arbitrary Banach space $(X, \|\cdot\|)$ and $\{\Pi_n\}$, with a sequence $\{Y_n\}$ where $Y_1 \subset Y_2 \subset \cdots$ are strictly embedded closed subspaces of $X$, showed that in this setting, $\rho(x, Y_n)$ can decay arbitrarily slowly. More precisely, for each null sequence (i.e. a sequence converging to $0$) $\{d_n\}$ of non-negative numbers, there exists $x\in X$ such that
\begin{equation*}
\label{Shap}
 \rho(x, Y_n) \neq \mathcal O (d_n),~\mbox{as $n\to\infty$}.
 \end{equation*}
 This result is further improved  by Tyuriemskih \cite{Tyu}, who showed that for any expanding sequence $\{Y_n\}$ of subspaces of $X$ and for any null sequence $\{d_n\}$ of positive numbers, there is an element $x\in X$  such that
 \begin{equation*}
 \label{tyu}
 \lim_{n \rightarrow \infty} \rho(x, Y_n) =0,~\mbox{and}~\rho(x, Y_n) \geq d_n~\mbox{for all $n\ge1$}.
   \end{equation*}
   However, it is also possible that the errors of the best approximation $\{\rho(x,Y_n)\}$ may converge to zero arbitrarily fast. For results of this type see   \cite{Al-To}.

    We refer the reader to \cite{Ak-Al,Ak-Le,Al-To, Borodin1,Lew1} for other versions of the Bernstein's Lethargy Theorem  and  to \cite{Ak-Le2,Alb,Mich,Ple1,Vas} for the Bernstein's Lethargy Theorem for Fr\'{e}chet  spaces. Consult \cite{De-Hu} for an application of Tyuriemskih's Theorem to convergence of sequence of bounded linear operators and  \cite{Al-Oi} for a generalization of Shapiro's Theorem.
    %%%%%%%%%%%%
    We first  state a  Bernstein Lethargy result concerning \textit{finite number} of subspaces, for the proof  of the following lemma we refer the reader to Timan's book  \cite{Tim}.
\begin{lemma}
\label{lemma1}
Let $(X,\|\cdot\|)$ be a normed linear space, $Y_1\subset Y_2\subset\ldots\subset Y_n\subset X$ be a finite system of strictly nested subspaces, $d_1>d_2>\ldots>d_n\ge0$ and $z \in X\backslash Y_n$. Then, there is an element $x\in X$ for which $\rho(x,Y_k)=d_k$ $(k=1,\ldots,n)$,  $\|x\|\le d_1+1$, and $x-\lambda z\in Y_n$ for some $\lambda>0$. If all $Y_k$ are existence subspaces (in particular, if all $Y_k$ are finite-dimensional or $X$ is reflexive),
then the above assertion holds for arbitrary numbers $d_1 \ge d_2 \ge \ldots\ge d_n \ge 0$.
\end{lemma}

    %%%%%%%%%%%

 The general question is that, given a Banach space $X$ and a sequence of strictly increasing subspaces $\{Y_n\}$ of $X$  with a dense union and a decreasing null sequence $ \{d_n\}$, does there exist an $x\in X$ such that $\rho(x, Y_n) =d_n$? If such an element exists for all such subspaces $Y_n$ and numbers $d_n$ , then we say $X$ has the $(B)$-property. Besides Hilbert spaces \cite{Tyu2},  there are no known examples of infinite-dimensional  spaces $X$ having the $(B)$-property.
  Although for a long time no sequence $\{d_n\}$ of this type was known for which such an element $x$ exists for \textit{all} possible Banach spaces $X$, Borodin in \cite{Borodin1} showed that the problem has a solution whenever the sequence $\{d_n\}$ satisfies some fast convergence condition, or the subspaces $\{Y_n\}$ satisfy some  conditions. The first result in \cite{Borodin1}, obtained from the sequence condition (\ref{condition_Borodin}) is stated below:
\begin{theorem}{\cite[p. 624]{Borodin1}}
  \label{thm:Borodin1}
  Let $X$ be an arbitrary infinite-dimensional Banach space, $Y_1 \subset Y_2
  \subset \cdots$ be an arbitrary countable system of strictly nested subspaces
  in $X$, and fix a numerical sequence $\{d_n\}_{n\ge1}$ for which there exists a natural number $n_0\ge1$ such that
  \begin{equation}
  \label{condition_Borodin}
  d_n >
  \sum_{k = n+1}^\infty d_k~\mbox{for all $n \ge n_0$ at which $d_n > 0$.}
  \end{equation}
  Then there is an element $x \in X$ such that
  \begin{equation}
  \label{rhod}
  \rho(x,Y_n) = d_n, ~\mbox{for all $n\ge1$}.
  \end{equation}
\end{theorem}
The condition (\ref{condition_Borodin}) on the sequence $\{d_n\}$ is the key to the derivation of (\ref{rhod}) in Theorem \ref{thm:Borodin1}.  Note that, the condition (\ref{condition_Borodin}) is satisfied when $d_n =(2+\epsilon)^{-n} $ for $\epsilon > 0$ arbitrarily small, however it is not satisfied when $d_n= 2^{-n}$. 
The second result in \cite{Borodin1}, based on the subspace condition (\ref{Borodin2}) is given below:
\begin{theorem}{\cite[p. 626]{Borodin1}}
\label{thm:Borodin2}
	Let $d_0\ge d_1 \geq d_2 \geq \dots > 0$ be a non-increasing sequence converging to $0$ and $Y_1 \subset Y_2 \subset \dots \subset X$ be a system of strictly nested subspaces of  an infinite-dimensional Banach space $X$ for which there exists a series of non-zero elements $\{q_n\}$ such that $q_n \in Y_{n + 1} \setminus Y_n$, and
\begin{equation}
	\label{Borodin2}
\|q\| \leq \displaystyle \frac{d_{k-1}}{d_{k}}\rho(q,Y_k)
\end{equation}
holds for all $k \in \mathbb N$ and any non-zero element $q$ in the linear span $\langle q_k, q_{k + 1},\dots \rangle$. Then there is some element $x$ in the closed linear span $\overline{\langle q_1, q_2,\dots \rangle}$ satisfying $$\rho(x,Y_n) = d_n ~~for ~all ~n \ge1.$$
\end{theorem}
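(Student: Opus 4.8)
The plan is to produce $x$ directly as a norm‑convergent series $x=\sum_{k\ge1}\gamma_k q_k$ in $\overline{\langle q_1,q_2,\dots\rangle}$, choosing the scalars $\gamma_k$ so that every tail carries the prescribed deviation. The first ingredient is a reduction: since $q_j\in Y_{j+1}\subseteq Y_n$ for $j\le n-1$, the head $\sum_{j<n}\gamma_j q_j$ lies in $Y_n$, so $\rho(x,Y_n)=\rho(r_n,Y_n)$ with $r_n:=\sum_{j\ge n}\gamma_j q_j$. The second ingredient is that hypothesis (\ref{Borodin2}) supplies, for every $q\in\langle q_k,q_{k+1},\dots\rangle$ and hence (by $1$‑Lipschitz continuity of $\rho(\cdot,Y_k)$ and of the norm) for every $q$ in its closure, the two‑sided comparison $\frac{d_k}{d_{k-1}}\|q\|\le\rho(q,Y_k)\le\|q\|$; in particular $\rho(q_k,Y_k)>0$. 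This comparison is the engine for both the construction and its convergence.

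Next I would build finite approximants. Fix $N$ and determine $\gamma_N,\gamma_{N-1},\dots,\gamma_1$ by a backward recursion so that $x_N:=\sum_{k=1}^N\gamma_k q_k$ satisfies $\rho(x_N,Y_n)=d_n$ for $n=1,\dots,N$. Writing $w_k:=\sum_{j=k+1}^N\gamma_j q_j$ for the current tail, each step asks for a scalar with $g(\gamma):=\rho(\gamma q_k+w_k,Y_k)=d_k$. Now $g$ is continuous, and by the lower comparison it is coercive, $g(\gamma)\to\infty$ as $|\gamma|\to\infty$; moreover the defining equation of the previous step gives $\rho(w_k,Y_{k+1})=d_{k+1}$, whence (\ref{Borodin2}) at index $k+1$ yields $\|w_k\|\le\frac{d_k}{d_{k+1}}\rho(w_k,Y_{k+1})=d_k$ and therefore $g(0)=\rho(w_k,Y_k)\le\|w_k\|\le d_k$. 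Since $g(0)\le d_k$ while $g(\gamma)\to\infty$, the intermediate value theorem furnishes the required $\gamma_k\ge0$, so the recursion runs to completion. The tail estimate $\|w_k\|\le d_k$, valid at every level uniformly in $N$, is recorded for later use; at the bottom it gives $\|x_N\|\le d_0$.

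Finally I would pass to the limit. From $\gamma_k q_k=w_{k-1}-w_k$ and $\|w_k\|\le d_k$ we get $|\gamma_k^{(N)}|\,\|q_k\|\le d_{k-1}+d_k$, so for each fixed $k$ the scalars $\{\gamma_k^{(N)}\}_N$ are bounded and a diagonal argument extracts a subsequence $N_i$ with $\gamma_k^{(N_i)}\to\gamma_k$ for every $k$. Since every finite block obeys $\|\sum_{l<k\le m}\gamma_k^{(N_i)}q_k\|\le d_l+d_m\le 2d_l$ uniformly, this bound passes to the limiting coefficients, making the partial sums of $x:=\sum_{k\ge1}\gamma_k q_k$ Cauchy; hence $x$ converges and lies in $\overline{\langle q_1,q_2,\dots\rangle}$. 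The same bound forces $\|x_{N_i}-x\|\to0$: splitting at a level $K$, the finitely many head coefficients converge while both tails have norm at most $2d_K$, so $\limsup_i\|x_{N_i}-x\|\le 4d_K\to0$. Norm convergence together with the $1$‑Lipschitz continuity of $\rho(\cdot,Y_n)$ then upgrades the finite‑level identities: for each fixed $n$ and all $N_i\ge n$ one has $\rho(x_{N_i},Y_n)=d_n$, whence $\rho(x,Y_n)=\lim_i\rho(x_{N_i},Y_n)=d_n$.

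The step I expect to be most delicate is ensuring that the exact equalities survive the limit. Boundedness of the coefficients alone only yields coordinatewise convergence; it is the uniform tail control $\|w_k^{(N)}\|\le d_k$, a direct consequence of (\ref{Borodin2}), that promotes this to genuine norm convergence of $x_{N_i}$ and thereby transfers each identity $\rho(\cdot,Y_n)=d_n$ to the limit. A secondary point to handle with care is the solvability of every step of the backward recursion, which again rests on the estimate $\rho(w_k,Y_k)\le\|w_k\|\le d_k$ placing the target value $d_k$ inside the range of $g$.
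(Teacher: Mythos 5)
The paper does not actually prove Theorem \ref{thm:Borodin2}; it only quotes it from Borodin, so there is no in-paper proof to compare against line by line. Judged on its own, your argument is correct, and it follows the same architecture that the paper itself deploys for its main Theorem \ref{Borodin} (and that Borodin uses): build finite sections by a backward intermediate-value recursion, extract a diagonal subsequence of the coefficients, show norm convergence, and transfer the identities by the $1$-Lipschitz continuity of $\rho(\cdot,Y_n)$. The two places where your proof genuinely needs the hypothesis are handled correctly and are worth highlighting: condition (\ref{Borodin2}) applied at index $k+1$ to the current tail $w_k$ (which satisfies $\rho(w_k,Y_{k+1})=d_{k+1}$ by the previous step) gives $\|w_k\|\le d_k$, and this single estimate both places $g(0)=\rho(w_k,Y_k)\le d_k$ below the target so the IVT step is solvable, and supplies the uniform block bound $\|\sum_{l<k\le m}\gamma_k^{(N)}q_k\|\le d_l+d_m$ that makes the limit series Cauchy and forces $\|x_{N_i}-x\|\to 0$ (your $4d_K$ can be sharpened to $3d_K$, but this is immaterial); meanwhile (\ref{Borodin2}) at index $k$ gives the coercivity $\rho(\gamma q_k+w_k,Y_k)\ge \frac{d_k}{d_{k-1}}\left(|\gamma|\,\|q_k\|-\|w_k\|\right)\to\infty$. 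Compared with the paper's Theorem \ref{Borodin}, where the tail and coefficient bounds must be manufactured from the auxiliary Lemmas \ref{lemma1'} and \ref{lemma2}, here the subspace condition hands you those bounds for free, which is why your proof is shorter; I find no gap in it.
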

     The latter result is improved by Aksoy et al. in  \cite{Aksoy}. Note that Borodin's proof works only under the assumption that $\overline{Y}_n$ is strictly included  in $Y_{n+1}$ . Necessity of this assumption on subspaces is illustrated by the following example.
    %%%%%%%%%%%%%
  \begin{example}\label{EXAM}
Let $X = L^{\infty}[0,1]$ and consider $C[0,1] \subset L^{\infty} [0,1]$ and define the subspaces  of $X$ as follows:

\begin{enumerate}
\item $Y_1 =  \mbox{space of all polynomials}$;

\item $Y_{n+1} = $span$[Y_n \cup \{f_{n}\}]$
where $f_{n} \in C[0,1] \setminus Y_n$, for $n\ge1$.
\end{enumerate}
Observe that by the Weierstrass Theorem we have $\overline {Y}_n = C[0,1] $ for all $n \ge1$.
Take any  $ f \in L^{\infty}[0,1]$ and consider the following cases:
\begin{enumerate}
\renewcommand{\labelenumi}{\alph{enumi})}
\item If  $ f \in C[0,1]$ , then
$$
\rho (f,Y_n) = \rho(f,C[0,1]) = 0~\mbox{for all $n\ge1$}.
$$
\item If $ f \in L^{\infty}[0,1] \setminus C[0,1] $, then
$$
\rho(f, Y_n) = \rho(f, C[0,1]) = d > 0~ \mbox{ (independent of $n$)}.
$$
Note that in the above, we have used the fact that $\rho(f,Y_n) = \rho(f,\overline{Y}_n)$.
Hence in this case Bernstein lethargy theorem does not hold.
\end{enumerate}
\end{example}

    %%%%%%%%%%%%%
      It is natural to ask the relationship between reflexivity and Bernstein Lethargy Theorem. After all, reflexive spaces have better approximation properties, such as the existence of bounded projections or the weak compactness of closed balls. Moreover, certain versions of Bernstein Lethargy Theorem require the space itself or subspaces to be reflexive. We examine this in more detail in the next section.

%%%%%%%%%%%%%%
%%%%%%%%%%%%%
%%%%%%%%%%%%
 %%%%%%%%%%%%%%
\subsection{Existence Property and Reflexivity}

Let $X$ be a real or complex Banach space and $Y$ be a closed linear subspace of $X$. An element $y_0\in Y$ is called the \textit{the best approximation or the least deviation} from $x\in X$ if $$||x-y_0||=\inf_{y\in Y} ||x-y||=\rho(x, Y).$$ 
We say a Banach space $X$ has the existence property or \textit{Property (E)} for short, if for every choice of closed subspace $Y$ and the vector $x\in X$ there is at least one best approximation $y_0$  for $x$ in $Y$. 
\begin{theorem}\cite{Hir}\label{Hir}
Let $X$ be a Banach space, then $X$ has property $(E)$ if and only if $X$ is reflexive.
\end{theorem}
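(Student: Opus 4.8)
The plan is to prove the two implications separately, using weak compactness for the forward direction and James's characterization of reflexivity for the converse.

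For the implication that reflexivity implies Property $(E)$, I would fix a closed subspace $Y$ and a point $x \in X$ and run a minimizing-sequence/compactness argument. Choose $y_n \in Y$ with $\|x - y_n\| \to \rho(x,Y)$. Since $\|y_n\| \le \|x\| + \|x - y_n\|$ is bounded, reflexivity (via the Eberlein--\v{S}mulian theorem) yields a subsequence $y_{n_k} \rightharpoonup y_0$ converging weakly. Because $Y$ is closed and convex it is weakly closed by Mazur's theorem, so $y_0 \in Y$; and since the norm is convex and norm-continuous it is weakly lower semicontinuous, whence $\|x - y_0\| \le \liminf_k \|x - y_{n_k}\| = \rho(x,Y)$. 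Thus $y_0$ realizes the distance, and $X$ has Property $(E)$.

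For the converse I would argue by contraposition, producing from non-reflexivity a closed subspace and a point with no best approximation. By James's theorem there exists $f \in X^*$ with $\|f\| = 1$ that does \emph{not} attain its norm on the unit sphere. Set $Y = \ker f$, a closed hyperplane, and fix any $x$ with $f(x) \ne 0$. Using the standard hyperplane distance formula $\rho(x, \ker f) = |f(x)| / \|f\| = |f(x)|$, I would show that a best approximation $y_0 \in Y$ exists if and only if $f$ attains its norm: writing $z = x - y_0$, one has $f(z) = f(x)$ (since $y_0 \in \ker f$) and $\|z\| = |f(x)|$, so $|f(z/\|z\|)| = 1 = \|f\|$, forcing $f$ to attain its norm at $z/\|z\|$. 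As $f$ does not attain its norm, no such $y_0$ exists, so $X$ fails Property $(E)$.

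The main obstacle is the converse direction, and specifically the reliance on James's theorem, which is the genuinely deep ingredient: the equivalence between reflexivity and norm-attainment of every functional. Once that is granted, the reduction to a kernel hyperplane and the elementary distance computation are routine. I would also note that the forward direction uses only the ``easy half'' of reflexivity, namely weak sequential compactness of bounded sets, so the entire subtlety is concentrated in the observation that the failure of norm-attainment is precisely what obstructs the existence of a best approximation.
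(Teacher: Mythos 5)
Your proof is correct and follows essentially the same route as the paper: the forward direction via weak compactness of bounded sets in a reflexive space (the paper invokes weak compactness of the closed unit ball, you spell out the minimizing-sequence argument with Eberlein--\v{S}mulian, Mazur, and weak lower semicontinuity of the norm), and the converse via James's theorem, with the kernel-of-a-non-norm-attaining-functional construction making explicit the reduction the paper only gestures at.
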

If $X$ is reflexive then any closed linear subspace $Y$ of $X$ is reflexive and from the fact that  in a reflexive space, the  closed unit ball is weakly compact, shows that $X$ has the property $(E)$.  The other implication comes from the result of R.C. James in  \cite{RCJ}, where he proves that if every linear functional assumes its maximum on absolute value on the unit sphere of any closed subspace of $X$, then $X$ is reflexive.
Another  equivalent way of looking at reflexive spaces are through the concept of proximal subspaces.  We call  $Y_n$ \textit{proximinal} in $X$ if for any $x\in X$ there exists $v\in Y_n$ such that
 $$\rho(x, Y_n) =\|x-v\|.$$ By letting $y=x-v$ we obtain that if    $Y_n$ is  proximinal in  $Y_{n+1}$, then given $x\in Y_{n+1}$,  there is $v\in Y_n$ such that $\rho(x,Y_n)=\|x-v\|$ , this condition reduces to for each $n\ge1$, there exists $y\in Y_{n+1}\backslash Y_n$ such that
\begin{equation}
\label{Yn'}
\rho(y, Y_n)=\|y\|
\end{equation}

 Thanks to the characterization of reflexivity in terms of the proximinality of its closed subspaces, the condition (\ref{Yn'})  holds for reflexive Banach spaces (see \cite[Theorem $2.14$, p. $19$]{Singer2}). \\
 
 Next we list some properties of the distance function $\rho(x, Y_n)$:\\[.02in]
Given a Banach space $X$ and its subspaces $Y_1\subset Y_2 \subset \dots \subset Y_n\subset \cdots$, it is clear that $$ \rho(x, Y_1) \geq\rho(x,Y_2)\geq \cdots, ~\mbox{for any $x\in X$}$$ and  thus $\{\rho(x,Y_n)\}_{n\ge1}$ forms a non-increasing sequence of errors of best approximation from $x$ to $Y_n$, $n\ge1$.  Furthermore,  it is easy to show that
\begin{eqnarray*}
&&\rho(\lambda x, Y_n)= |\lambda| \rho(x, Y_n),~\mbox{for any}~ x\in X~\mbox{and}~\lambda\in\mathbb R;\\
&& \rho(x+v, Y_n)= \rho(x, \overline Y_n),~\mbox{for any}~x\in X~ \mbox{and}~v\in \overline Y_n;\\
&&\rho(x_1+x_2, Y_n)\leq \rho(x_1, Y_n)+\rho(x_2, Y_n)
\end{eqnarray*}
and consequently   $$\rho(x_1+x_2, Y_n)\geq |\rho(x_1, Y_n)-\rho(x_2, Y_n)|,~ \mbox{ for any}~  x_1,x_2\in X.$$
 Note that we also have: $$ |\rho(x_1, Y_n)-\rho(x_2, Y_n)| \leq \|x_1-x_2\|,~ \mbox{ for any}~  x_1,x_2 \in X,$$ which  implies that  the mapping $X \longrightarrow \mathbb{R}^+$  defined by $ x\longmapsto \rho(x, Y_n)$ is  continuous and thus properties of continuous mappings such as the intermediate value theorem can be used. Moreover 
 $ \rho(x+ty, Y_n)$ is a continuous function of $t$ and
 if $y \notin Y_n$ then $ \rho(x+ty, Y_n) \to \infty$ as $|t| \to \infty$.

\subsection{Bernstein Lethargy Theorem and Reflexivity}

\begin{definition}\cite{Nik}
A Banach space $X$  has the property $(B_f)$ if for every countable sequence of closed  distinct subspaces 
$$\{0\} = Y_0\subset Y_1\subset \dots\subset Y_n\subset \dots\subset X$$ and any finite sequence of nonnegative numbers
$$ e_0\geq e_1\geq \dots\geq e_N$$ there exists $x\in X$ for which $\rho(x,Y_n)=e_n$ for $n=1,2,\dots, N$ and $\rho(x,Y_n)=0$ when $n=N+1,\dots$. 

\end{definition}
\begin{theorem}\cite{Nik}
\label{B_f}
A Banach space  $X$ has the property $(B_f)$ if and only if $X$ is reflexive. 
\end{theorem}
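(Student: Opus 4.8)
The plan is to prove the two implications separately: the ``reflexive $\Rightarrow (B_f)$'' direction is a direct application of Lemma~\ref{lemma1}, while the converse I would establish in contrapositive form, via R.C.\ James' theorem \cite{RCJ}.

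For the forward direction, suppose $X$ is reflexive, fix a chain $\{0\}=Y_0\subset Y_1\subset\cdots\subset X$ of distinct closed subspaces and numbers $e_0\ge e_1\ge\cdots\ge e_N\ge 0$. Since $X$ is reflexive, every closed subspace is an existence subspace, so the second assertion of Lemma~\ref{lemma1} applies to the finite strictly nested system $Y_0\subset Y_1\subset\cdots\subset Y_N$, the arbitrary non-increasing numbers $e_0,\dots,e_N$, and any $z\in Y_{N+1}\setminus Y_N$ (such $z$ exists because the subspaces are distinct). This produces $x\in X$ with $\rho(x,Y_k)=e_k$ for $k=0,1,\dots,N$ --- in particular $\|x\|=\rho(x,Y_0)=e_0$ --- together with $x-\lambda z\in Y_N$ for some $\lambda>0$. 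As $z\in Y_{N+1}$, this forces $x\in Y_{N+1}$, whence $\rho(x,Y_n)=0$ for every $n\ge N+1$. Thus $X$ has property $(B_f)$.

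For the converse I would show that if $X$ is \emph{not} reflexive then $(B_f)$ fails. The mechanism is the following remark: if $g$ is a norm-one functional on a closed subspace $W$ that does \emph{not} attain its norm, and $Y_1:=\ker g$, then $\rho(x,Y_1)=|g(x)|<\|x\|$ for every nonzero $x\in W$, i.e.\ no nonzero vector of $W$ is Birkhoff--James orthogonal to $Y_1$. Hence, if $Y_1=\ker g$ can be made the first step of an infinite strictly nested chain whose second member is $W$, then taking $N=1$ and $e_0=e_1=1$ defeats $(B_f)$: any admissible $x$ would have to satisfy $x\in\overline{Y_2}=W$ together with $\|x\|=\rho(x,Y_0)=e_0=1$ and $\rho(x,Y_1)=1$, forcing $\rho(x,Y_1)=\|x\|$, which is impossible. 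Since a non-reflexive space carries a non-norm-attaining functional by James' theorem, the whole difficulty is to install $\ker g$ at the bottom of an \emph{infinite} chain.

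This is exactly the main obstacle. Because $\ker g$ has codimension one in $W$, the chain can only be continued upward, $W=Y_2\subsetneq Y_3\subsetneq\cdots\subset X$, if $W$ itself has infinite codimension in $X$; yet $g$ exists only if $W$ is non-reflexive. So the crux is the auxiliary claim that \emph{every non-reflexive Banach space contains a closed, non-reflexive subspace of infinite codimension}. I would deduce this from James' sequential characterization of non-reflexivity: one obtains $\theta\in(0,1)$, $(x_n)\subset B_X$ and $(f_n)\subset B_{X^*}$ with $f_n(x_m)=\theta$ for $n\le m$ and $f_n(x_m)=0$ for $n>m$; the $x_n$ are then $\theta$-separated and seminormalized, so by the Bessaga--Pe\l czy\'nski selection principle a subsequence is basic, and after relabelling $(x_n)$ is a basic James system. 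Setting $W=\overline{\operatorname{span}}\{x_{2k}\}$, the pairs $(x_{2k},f_{2k}|_W)$ again form a James system, so $W$ is non-reflexive, while the cosets $x_{2k-1}+W$ are linearly independent (read off by the basis coordinate functionals), giving $\dim(X/W)=\infty$. With such a $W$ the chain $\{0\}\subset\ker g\subset W\subsetneq Y_3\subsetneq\cdots$ and the numbers $e_0=e_1=1$ complete the refutation. I expect this subspace-selection step --- securing infinite codimension and non-reflexivity simultaneously, which the one-functional James argument alone cannot deliver --- to be the real work; everything else is bookkeeping.
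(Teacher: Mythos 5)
Your forward direction is correct but takes a different, and in fact cleaner, route than the paper: you invoke the second assertion of Lemma~\ref{lemma1} (in a reflexive space every closed subspace is an existence subspace, so arbitrary non-increasing $e_0\ge\dots\ge e_N\ge0$ are admissible) and read off $x\in Y_{N+1}$ from the conclusion $x-\lambda z\in Y_N$ with $z\in Y_{N+1}\setminus Y_N$, which forces $\rho(x,Y_n)=0$ for $n>N$. The paper instead rebuilds the element by hand, choosing coefficients $c_{N+1},\dots,c_1$ one at a time via intermediate-value arguments and using reflexivity to extract a best approximation at each stage; your version buys brevity at the price of using Timan's lemma as a black box. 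Your converse rests on the same mechanism as the paper's: the data $e_0=e_1>0$ and $e_n=0$ for $n\ge2$ force $x\in\overline{Y_2}$ with $\rho(x,Y_1)=\|x\|$, which is impossible when best approximations from $Y_1$ do not exist. You phrase the obstruction through a non-norm-attaining functional $g$ and $Y_1=\ker g$, while the paper works with an element $y_0$ admitting no nearest point in $Y_1$ and $Y_2=\overline{{\rm span}\{y_0,Y_1\}}$; these are equivalent. You are also right that the delicate point is installing this two-step obstruction at the bottom of an \emph{infinite} strictly nested chain, i.e., producing a non-reflexive closed subspace $W$ of infinite codimension; the paper essentially asserts this (``${\rm span}(X\setminus Y)$ is infinite dimensional'') without proof, so your explicit attention to it is an improvement.

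That said, your proof of the auxiliary claim has a genuine gap at the basic-sequence selection step. A bounded, $\theta$-separated, seminormalized sequence need not have a basic subsequence: in $\ell_2$ the sequence $x_n=e_1+e_n$ is bounded and $\sqrt2$-separated, yet no subsequence is basic, since with $a_k=1$ for $k\le m$ and $a_k=-1$ for $m<k\le 2m$ the full sum has norm $\sqrt{2m}$ while the $m$-th partial sum has norm about $m$, so the basis projections are unbounded. The Bessaga--Pe\l czy\'nski principle applies to seminormalized \emph{weakly null} sequences, and nothing prevents your James system $(x_n)$ from having a weakly convergent subsequence $x_{n_k}\rightharpoonup x$ (necessarily with $f_m(x)=\theta$ for all $m$). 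You must therefore split into cases: if no subsequence converges weakly, a basic subsequence exists by the standard selection theorem (via Rosenthal's dichotomy); if $x_{n_k}\rightharpoonup x$, you should pass to the weakly null, seminormalized differences $y_k=x_{n_k}-x$, for which $f_m(y_k)=-\theta$ when $m>n_k$ and $0$ otherwise --- but this transposes the James pattern, so you additionally need the (true, but not free) fact that the transposed pattern still witnesses non-reflexivity of $\overline{{\rm span}}\{y_{2k}\}$. With that repair your argument closes; as it stands, the subspace-selection step you yourself identify as ``the real work'' is not yet done.
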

Since the paper   \cite{Nik}  of Nikol'skii is  not available  in translation and appeared in a hard to reach regional  journal, in the following we give the proof by clarifying the steps.
\begin{proof}
Suppose $X$ is not reflexive,  there exists $Y$, non-reflexive subspace of $X$, such that its complementary $Y^c:={\rm span}(X\setminus Y)$ is infinite dimensional, and we use this to construct a sequence of closed subspaces $Y_0=\{0\}\subset \cdots \subset Y_n \subset Y_{n+1}\subset \cdots\subset X$ with certain properties. In particular, there is $y_0\in X$ such that $\rho(y_0,Y_1)=\rho_0>0$ but there are no best approximations to $y_0$ with the elements of $Y_1$. In particular, $\|y_0\|_X>\rho_0$ since otherwise $0$ would be a best approximation to $y_0$ from $Y_1$.  Moreover, $Y_2=\overline{{\rm span}\{y_0,Y_1\}}^X$. You claim that there is no $x\in X$ such that 
\[
\|x\|=\rho(x,Y_0)=\rho (x,Y_1)=\rho_0>0=\rho(x,Y_2)=\rho(x,Y_3)=\cdots 
\]
Clearly, $0=\rho(x,Y_2)$ means that $x\in Y_2=\overline{{\rm span}\{y_0,Y_1\}}^X$. If $x=\lambda y_0+y_1$ with $y_1\in Y_1$ and $\lambda\in \mathbb{R}$ , then $\rho(x,Y_1)=\rho(\lambda y_0,Y_1)=|\lambda|\rho_0=\rho_0$ implies $|\lambda|=1$, so $x=\pm y_0+y_1$. Then $0=\|x\|=\rho(x,Y_0)$ would imply $$\|\pm y_0+y_1\|=\|y_0\pm y_1\|=\rho_0=\rho(y_0,Y_1)$$ and $y_0$ would admit a best approximation from $Y_1$, which is impossible. 

Now, it may happen that $x=\lim_{n\to\infty}(\lambda_ny_0+y_n)$ for certain sequences $(\lambda_n)\subseteq \mathbb{R}$ and $(y_n)\subseteq Y_1$, but $x$ does not belong to ${\rm span}\{y_0,Y_1\}$.  In such a case, we would get
$$\rho(x,Y_1)=\lim_{n\to\infty}\rho
(\lambda_n y_0,Y_1)=\lim_{n\to\infty}|\lambda_n|\rho_0=\rho_0,$$ which just means $\lim_{n\to\infty}|\lambda_n|=1$. 

%We need to get a contradiction from these facts and the fact that 
%\begin{equation} \label{uno}
%    \rho_0=E(x,Y_0)=\|x\|=\lim_{n\to\infty} \|\lambda_ny_0+y_n\|_X. 
%\end{equation}

Now, $\lim_{n\to\infty}|\lambda_n|=1$ implies that we can extract a subsequence $(\lambda_{n_k})$ from $(\lambda_n)$ that converges either to $1$ or to $-1$ (this is so because we only deal with real numbers). Assume that $\lim_{k\to\infty}\lambda_{n_k}=1$. Then $x=\lim_{k\to\infty}(\lambda_{n_k}y_0+y_{n_k})= y_0+\lim_{k\to\infty}y_{n_k}$ means that $\lim_{k\to\infty}y_{n_k}=x-y_0$ exists, so it belongs to $Y_1$ (which is closed). A contradiction since we assumed that $x$ does not belong to ${\rm span}\{y_0,Y_1\}$! An analogous argument applies if $\lim_{k\to\infty}\lambda_{n_k}=-1$. 

%%%%%%%%%%%%%%%%%%%
\iffalse
then there exists a non-reflexive subspace $Y$  such that $X\setminus Y$ is infinite dimensional.  Based on the Theorem \ref{Hir} there is an element $y_0\in Y$ and a closed subspace $Y^* \subset Y$ such that for $y_0$ there does not exist in $Y^*$ a least deviation.  Suppose $\rho(y_0, Y^*)= \rho_0> 0$. Next set $Y_1=Y^*$ and let $Y_2 $ be the closed linear span of $y_0$ and $Y^*$. In this manner we can define a countable sequence $\{Y_n\}_{n=3}^{\infty}$ where all $Y_n$'s are distinct closed subspaces of $X$. This is possible since $X\setminus Y$ is infinite dimensional.  Now, define numbers $e_n$ with
$$ e_0=e_1=\rho_0, \quad \quad e_n=0\quad\mbox{for}\quad n\geq 2.$$  $y_0$ is the unique element up to constant $\epsilon$, $|\epsilon|=1$ that satisfies the following conditions:
$$ y_0\in Y_2 \quad \mbox{and} \quad  \rho(y_0, Y_1)=e_1=\rho_0$$
The equation $\rho( y_0, Y_0)=  e_0= \rho_0$ can not hold. Since in this case there is an element  in $Y_1$  that will be the least deviation from $y_0$. Thus there does not exist $Y_1=Y^*$ with the $\rho(y_0, Y_1)=e_1=\rho_0$.  
\fi
%%%%%%%%%%%%%%%%%%%%%%%%%%
i.e., if $X$ is not reflexive  then $X$ does not have the property $(B_f)$.

To prove the other implication, suppose $X$ is reflexive, take any element $y_{N+1} \in Y_{N+1} \setminus Y_N$, then find $t=c_{N+1}$ such that $$ \rho( c_{N+1} y_{N+1}, Y_N)= e_N.$$ Since $X$ is reflexive, there exists an element $y\in Y_N$ which is the least deviation from $ c_{N+1} y_{N+1}$.

Hence we have $$ \rho( c_{N+1} y_{N+1}+ y , Y_0)= e_N.$$

Let $y\in Y_m\setminus Y_{m-1}$, consider the following possibilities:
%%%%%%%%%%%%%%%%
\begin{enumerate}
\renewcommand{\labelenumi}{\alph{enumi})}

%\begin{description}
\item$m = N$ and $e_N \le e_{N-1}$\\

In this case, take $y_N\in Y_N\backslash Y_{N-1}$ to be the least deviation from $c_{N+1}y_{N+1}$ in $Y_N$,
$$
 \rho(c_{N+1}y_{N+1}-y_N, Y_{N-1} )\le \|c_{N+1}y_{N+1}-y_N\|=e_N\le e_{N-1}.
$$
Also since $\rho(y_N, Y_{N-1} )>0$, by the triangle inequality of $\rho(\cdot,Y)$, 
$$
 \rho(c_{N+1}y_{N+1}+\lambda y_N, Y_{N-1} )\ge  \lambda \rho(y_N, Y_{N-1} )- \rho(c_{N+1}y_{N+1}, Y_{N-1} )\xrightarrow[\lambda\to+\infty]{}+\infty.
$$
Therefore by the intermediate value theorem, there is $c_{N}\in\mathbb R$ such that
$$
\rho(c_{N+1}y_{N+1}+c_N y_N, Y_{N-1} )=e_{N-1}.
$$
\item $m < N$ and $e_N = e_{N-1}$\\

In this case, taking $c_N=1$, we directly get
$$
\rho(c_{N+1}y_{N+1} + c_Ny_N, Y_N ) = \rho(c_{N+1}y_{N+1} + y_N, Y_N )=e_N=e_{N-1}.
$$
\item $m < N$ and $e_N < e_{N-1}$\\

In this case, for any $y\in Y_N\backslash Y_{N-1}$, 
$$
\|c_{N+1}y_{N+1}-y\|>\rho(c_{N+1}y_{N+1},Y_N)=e_N.
$$
Let us denote $\|c_{N+1}y_{N+1}-y\|=e_N+\delta$ with $\delta>0$. Then by the triangle inequality,
$$
\|c_{N+1}y_{N+1}-cy\|\ge \|c_{N+1}y_{N+1}-y\|-|c-1|\|y\|=e_n+\delta-|c-1|\|y\|.
$$
For any $\epsilon\in(0,e_{N-1}-e_N)$, take $c\in\mathbb R$ such that $\delta-|c-1|\|y\|=\epsilon$ and $y_N=cy$.  
Then $y_N\in Y_N\backslash Y_{N-1}$ and
$$
\rho(c_{N+1}y_{N+1}-y_N,Y_{N-1})\le \|c_{N+1}y_{N+1}-y_N\|=e_N+\epsilon<e_{N-1}.
$$
Also since $\rho(y_N, Y_{N-1} )>0$, by the triangle inequality of $\rho(\cdot,Y)$, 
$$
 \rho(c_{N+1}y_{N+1}+\lambda y_N, Y_{N-1} )\ge  \lambda \rho(y_N, Y_{N-1} )- \rho(c_{N+1}y_{N+1}, Y_{N-1} )\xrightarrow[\lambda\to+\infty]{}+\infty.
$$
Therefore by the intermediate value theorem, there is $c_{N}\in\mathbb R$ such that
$$
\rho(c_{N+1}y_{N+1}+c_N y_N, Y_{N-1} )=e_{N-1}.
$$
\end{enumerate}
%%%%%%%%%%%%%%%%%
%%%%%%%%%%%%%%

In all these three cases, using  the reflexivity of the subspaces  and the properties  $\rho\,\,(\,\, .\,\, , Y)$,  in particular continuity  it can be shown that $$\rho(c_{N+1} y_{N+1}+c_Ny_N, Y_{N-1} )= e_{N-1}.$$ Continuing in this manner we can construct an element $$x =c_{N+1} y_{N+1}+\dots+c_1y_1$$ which has the desired properties.
\end{proof}
 Note that Nikol'skii's result is  restricted to a special sequence of the form
 $$d_0\geq d_1\geq \dots  \geq d_N \geq d_{N+1} =0 =d_{N+m}$$ for all $m\geq 1$.\\
 
  The general question is that, given a Banach space $X$ and a sequence of strictly increasing subspaces $\{Y_n\}$ of $X$  with a dense union and a decreasing null sequence $ \{d_n\}$, does there exist an $x\in X$ such that $\rho(x, Y_n) =d_n$? If such an element exists for all such subspaces $Y_n$ and numbers $d_n$ , then we say $X$ has the $(B)$-property. Besides Hilbert spaces \cite{Tyu2}, ( see \cite{AL} for an alternative proof)  there are no known examples of infinite-dimensional  spaces $X$ having the $(B)$-property. Nevertheless one can show the following relationship between the property (B) and the reflexivity.

\begin{theorem}
If $X$ satisfies the property (B) then X is reflexive.
\end{theorem}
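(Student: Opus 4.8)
The plan is to argue by contraposition, reusing the obstruction already established in the proof of Theorem~\ref{B_f}. Assuming $X$ is \emph{not} reflexive, I will exhibit one admissible configuration for property~(B)---a strictly increasing chain with dense union together with a non-increasing null sequence---for which no $x\in X$ realises the prescribed errors of best approximation, so that $X$ cannot have property~(B). Equivalently, this amounts to showing that property~(B) forces property~$(B_f)$, whence reflexivity follows from Theorem~\ref{B_f}.

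First, by Theorem~\ref{Hir} non-reflexivity is the same as the failure of property~(E); hence, exactly as at the start of the proof of Theorem~\ref{B_f}, there are a closed subspace $Y_1$ and an element $y_0\in X\setminus Y_1$ with $\rho(y_0,Y_1)=\rho_0>0$ admitting \emph{no} best approximation from $Y_1$, and in particular $\|y_0\|>\rho_0$. Put $Y_0=\{0\}$ and $Y_2=\overline{{\rm span}\{y_0,Y_1\}}$, so that $Y_0\subsetneq Y_1\subsetneq Y_2$. Since a non-reflexive space is infinite-dimensional, I would then enlarge this tower to a strictly increasing chain $Y_2\subsetneq Y_3\subsetneq\cdots$ whose union is dense in $X$ (adjoining, say, the terms of a dense sequence one at a time), and choose the target errors $d_0=d_1=\rho_0$ and $d_n=0$ for $n\ge 2$. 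This $\{d_n\}$ is a non-increasing null sequence, so the pair $(\{Y_n\},\{d_n\})$ is admissible for property~(B).

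It remains to check that no $x\in X$ satisfies $\rho(x,Y_n)=d_n$ for all $n$, and this is verbatim the obstruction from Theorem~\ref{B_f}. Because the $Y_n$ increase and $Y_2$ is closed, $\rho(x,Y_n)=0$ for every $n\ge 2$ forces $x\in\bigcap_{n\ge 2}\overline{Y_n}=\overline{Y_2}=Y_2$; writing $x=\lambda y_0+y_1$ (or, in the limiting case, $x=\lim_k(\lambda_k y_0+y_k)$), the requirement $\rho(x,Y_1)=\rho_0$ gives $|\lambda|=1$, and then $\|x\|=\rho(x,Y_0)=\rho_0$ yields $\|y_0\pm y_1\|=\rho_0=\rho(y_0,Y_1)$, i.e.\ $y_0$ would possess a best approximation in $Y_1$---a contradiction. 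Thus $X$ fails property~(B), as required. The one point that needs genuine care is the enlargement step: property~(B) insists that the union of the chain be dense in $X$, whereas Nikol'skii's obstruction constrains only the bottom $Y_0\subset Y_1\subset Y_2$ of the tower. Fortunately the relation $\bigcap_{n\ge 2}\overline{Y_n}=Y_2$ is automatic for any increasing completion, so the obstruction survives intact; the only thing to secure is the existence of such a dense-union completion, which is routine once $X$ is separable, and the remaining estimates are just the continuity and homogeneity of $\rho(\cdot,Y_n)$ recorded above.
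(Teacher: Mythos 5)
Your route is genuinely different from the paper's. The paper argues directly: it feeds property (B) the pair $Y_1=\{0\}$, $Y_2=\ker f$ with $d_1=d_2=1$ for an arbitrary nonzero functional $f$, reads off $1=\|x\|=|f(x)|/\|f\|$ from the two prescribed deviations, concludes that every functional attains its norm on the unit sphere, and invokes James' theorem \cite{RCJ}. You instead argue by contraposition through Theorem \ref{Hir}: non-reflexivity yields a non-proximinal pair $(Y_1,y_0)$, and you recycle Nikol'skii's obstruction from the proof of Theorem \ref{B_f} to show that the configuration $d_0=d_1=\rho_0$, $d_n=0$ for $n\ge 2$ is unrealizable. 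The obstruction itself is transplanted correctly: $\rho(x,Y_2)=0$ forces $x\in Y_2$, the constraint $\rho(x,Y_1)=\rho_0$ forces $|\lambda|=1$ (also in the limiting case), and $\|x\|=\rho_0$ would then produce a best approximation to $y_0$ from $Y_1$.

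There is, however, a genuine gap in your enlargement step, and it is not the one you flag. Property (B) requires an \emph{infinite} strictly increasing chain of proper closed subspaces with dense union, so you need $Y_2=\overline{{\rm span}\{y_0,Y_1\}}$ to have infinite codimension in $X$. The failure of property (E) does not hand you such a $Y_1$: the canonical witness of non-proximinality in a non-reflexive space is $Y_1=\ker f$ for a non-norm-attaining functional $f$, and for that choice ${\rm span}\{y_0,Y_1\}=X$, so $Y_2=X$ and there is no admissible (B)-configuration to which your obstruction applies. You must separately establish the existence of a non-proximinal closed subspace of infinite codimension (this is precisely the assertion that the proof of Theorem \ref{B_f} itself makes without justification, so you cannot simply cite it as established). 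In addition, the dense-union completion above $Y_2$ that you call routine is routine only when $X/Y_2$ admits a separable infinite-dimensional quotient --- automatic for separable $X$, but a nontrivial matter in general. The paper's direct argument avoids both difficulties at the price of quoting James' deep theorem: it needs only the two equalities $\rho(x,Y_1)=\rho(x,Y_2)=1$ and never has to exhibit a full configuration on which (B) fails.
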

\begin{proof}
Suppose   $d_1 = d_2=1$, for $n \geq 3$  the sequence $\{d_n\}$  is non-increasing and $\{d_n\} \to 0$.
Consider a pair of subspaces $Y_1\subset Y_2 \subset X$ such that
$$
Y_1= \{0\};\quad \quad Y_2= \ker (f)= \{ y\in X: \,\, f(y)=0\},
$$
where $f$ is a nonzero linear functional. Suppose the Bernstein's Lethargy Theorem holds.  In other words, there is an element $x\in X$ with prescribed best approximation errors. Since in this case

 $$ \rho(x, Y_1)= \rho(x, Y_2)= 1$$ and

$$\rho(x, Y_1)= \|x\|, \quad \rho(x, Y_2)= \displaystyle\frac{f(x)}{\|f\|}, $$  one has the equalities
$$ 1= \|x\|= \displaystyle\frac{f(x)}{\|f\|}.$$
 From the last equality we see that the functional $f$ assumes its supremum at $x$.   This implies that  $X$ is reflexive (see James' Theorem \cite{RCJ}).
\end{proof}

\begin{remark}
The next  step is to consider the existence of best approximation together with a sequence of nonnegative numbers $d_n$ satisfying $\displaystyle \lim_{n\to \infty} d_n=0$ and investigate the relationship between Bernstein Lethargy Theorem and reflexivity. 
\end{remark}
\noindent \textbf{Acknowledgement.} Authors would like to thank Jose Mar\'ia Almira for his valuable comments and to Yu. A. Skvortsov for his help in locating the reference \cite{Nik}.

\end{document}